\newtheorem{thm}{Theorem}[section]
\newtheorem{cor}{Corollary}[section]
\newtheorem{conj}{Conjecture}[section]
\newtheorem{lem}{Lemma}[section]
\newtheorem{rem}{Remark}[section]
\title{A lower bound of the energy of non-singular graphs in terms of average degree}
\author[1]{Saieed Akbari\thanks{E-mail addresses: s\_akbari@sharif.edu, dabirian@umich.edu, sghasemi@uh.edu}}
\author[2]{Hossein Dabirian}
\author[3]{S. Mahmood Ghasemi}
\affil[1]{Department of Mathematical Science, Sharif University of Technology, Tehran, Iran}
\affil[2]{Department of Electrical Engineering, University of Michigan, Michigan, USA }
\affil[3]{Department of Mathematics, University of Houston, Texas, USA}
\date{}                   
\begin{document}
\maketitle
\begin{abstract}
    Let $G$ be a graph of order $n$ with adjacency matrix $A(G)$. The \textit{energy} of graph $G$, denoted by $\mathcal{E}(G)$, is defined as the sum of absolute value of eigenvalues of $A(G)$. It was conjectured that if $A(G)$ is non-singular, then $\mathcal{E}(G)\geq\Delta(G)+\delta(G)$. In this paper we propose a stronger conjecture as for $n \geq 5$, $\mathcal{E}(G)\geq n-1+\Bar{d}$, where $\Bar{d}$ is the average degree of $G$. Here, we show that conjecture holds for bipartite graphs, planar graphs and for the graphs with $\Bar{d}\leq n-2\ln n -3$.
\end{abstract}

\textit {Keywords: Energy of the graphs, average degree, non-singular graph}

\textit {2020 Mathematics Subject Classification:} 05C50, 15A18.

\section{Introduction}
Throughout this paper, all graphs are simple that is with no loop or multiple edges. Let $G$ be a graph. We denote the vertex set and the edge set of $G$ by $V(G)$ and $E(G)$, respectively. Let $A(G)$ be the adjacency matrix of the graph $G$. A graph $G$ is called \textit{non-singular} if $A(G)$ is non-singular. The \textit{energy} of a graph $G$, denoted by $\mathcal{E}(G)$, is defined as the sum of absolute value of eigenvalues of $A(G)$ introduced for the first time by Gutman in \cite{Gutman}. In recent years, many researchers investigated the concept of the energy of graphs and its bounds, see \cite{Ghodrati}, \cite{dasgut}, \cite{Juan} and \cite{Wagner}. In this article, we are interested in studying some new lower bounds for the energy of non-singular graphs.\\
We denote the eigenvalues of $A(G)$ by $\lambda_1,\lambda_2,\ldots,\lambda_n$ from the largest to the smallest one. The two largest absolute value of eigenvalues are denoted by $\mu_1,\mu_2$. Obviously, $\mu_1=\lambda_1$, however, $\mu_2$ is either $\lambda_2$ or $-\lambda_n$. The maximum, the minimum and the average degree of graph $G$ are denoted by $\Delta (G)$, $\delta (G)$, and $\Bar{d}$  respectively. We call $|V(G)|$ and $|E(G)|$ by \textit{order} and the \textit{size} of $G$, respectively. We denote the path of order $n$ by $P_n$.

\section{Main conjectures and primary results }

A well-known approach to find a lower bound for graph energy whose eigenvalues are non-zero is using the fact that if $x>0$, then $x\geq \ln x + 1$ \cite{das2013improving}. Let $G$ be a graph of order $n$ and size $m$. From this inequality it follows that:
\begin{equation}\label{mjl}
    \begin{split}
        \mathcal{E}(G)=\lambda_1 + \sum_{i=2}^n|\lambda_i| & \geq   n -1 +\lambda_1 + \ln |\lambda_2\lambda_3\cdots\lambda_n|\\
        & = n - 1 + \lambda_1 + \ln|\det A(G)| - \ln\lambda_1.
    \end{split}
\end{equation}
By [Proposition 3.1.2, p. 33] of \cite{Spectraofgraphs}, $\lambda_1\geq \Bar{d}=2m/n\geq \delta(G)$. As a result, if $|\det A(G)|\geq \lambda_1$ or equivalently,  $|\lambda_2\lambda_3\cdots\lambda_n|\geq 1$, as $n-1\geq\Delta(G)$, the following conjecture holds:

\begin{conj}
\cite{akbari2020short} The energy of a non-singular graph $G$ satisfies the following inequality: \[\mathcal{E}(G)\geq \Delta(G) + \delta(G)\]
\label{conj1}
\end{conj}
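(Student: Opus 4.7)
The plan is to start from the bound (\ref{mjl}), rewritten as
$$\mathcal{E}(G) \geq (n-1) + \lambda_1 - \ln \lambda_1 + \ln|\det A(G)|,$$
and split the argument into two cases according to whether $|\det A(G)|$ is at least $\lambda_1$ or smaller. Since $A(G)$ has integer entries and $G$ is non-singular, $|\det A(G)| \geq 1$, so the logarithmic correction is always non-negative.

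In the first case, $|\det A(G)| \geq \lambda_1$, the two logarithmic terms cancel and the bound simplifies to $\mathcal{E}(G) \geq (n-1) + \lambda_1$. Combining $n-1 \geq \Delta(G)$ with $\lambda_1 \geq \Bar{d} \geq \delta(G)$ yields $\mathcal{E}(G) \geq \Delta(G) + \delta(G)$ immediately; this is essentially the observation already recorded in the excerpt.

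The real difficulty is the opposite case $1 \leq |\det A(G)| < \lambda_1$. Here only $\mathcal{E}(G) \geq (n-1) + \lambda_1 - \ln \lambda_1$ is available, and the slack $\ln \lambda_1$ can be as large as $\ln(n-1)$, which is enough to spoil the target inequality once $\delta(G) > \lambda_1 - \ln \lambda_1$. To absorb this deficit I would try to combine three refinements: (i) a sharper lower bound on $\lambda_1$ in terms of the degree sequence (for example $\lambda_1^2 \geq \Delta(G)$), so that part of $\lambda_1$ pays for $\Delta(G)$ directly and frees $n-1$ to contribute elsewhere; (ii) an extra contribution from $\mu_2$, again via $x \geq \ln x + 1$, whenever the second-largest absolute eigenvalue is bounded away from $1$; and (iii) the combinatorial expansion of $\det A(G)$ in terms of linear subgraphs, to show that $|\det A(G)|$ being very small forces restrictive structural features on $G$ (near-regularity, few short cycles) that can be exploited to boost $\mathcal{E}(G)$ separately.

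The principal obstacle I foresee is the extremal subcase in which $|\det A(G)| \in \{1,2\}$ while $\lambda_1$ is close to $n-1$. Such graphs are forced to be almost complete and almost regular, so in this regime $\delta(G) \approx \Delta(G) \approx n-1$ and all three refinements above lose their edge simultaneously. The most plausible resolution is an independent direct argument for this corner --- likely an interlacing or edge-deletion comparison against $K_n$ or $K_n$ minus a perfect matching --- with the logarithmic estimate handling every other graph.
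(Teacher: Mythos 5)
The statement you are trying to prove is labelled as a \emph{conjecture} in the paper, and the paper does not prove it in full generality; it only records the observation that the inequality follows from (\ref{mjl}) whenever $|\det A(G)|\geq\lambda_1$ (equivalently $|\lambda_2\cdots\lambda_n|\geq 1$), or whenever $\delta(G)<\Bar{d}-\ln\Bar{d}$, and then devotes the rest of the article to proving the stronger Conjecture \ref{conj2} for special classes (bipartite, planar, $\Bar{d}\leq n-2\ln n-3$, etc.). Your first case reproduces exactly that easy observation, so up to there you are aligned with the paper.

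The problem is that your second case is not a proof but a research programme, and it is the second case that contains all of the difficulty. Refinement (i) as stated, $\lambda_1^2\geq\Delta(G)$, only gives $\lambda_1\geq\sqrt{\Delta(G)}$, which is far too weak to let $\lambda_1$ ``pay for $\Delta(G)$''; refinement (ii) needs $\mu_2$ bounded away from $1$, which fails precisely for the graphs where (\ref{mjl}) is tight; and refinement (iii) rests on the claim that a small determinant forces near-regularity or few short cycles, which is false (every tree with a perfect matching has $|\det A|=1$ and can be very far from regular). Your own closing paragraph concedes that the extremal corner is unresolved and would need ``an independent direct argument.'' Since no such argument is supplied, the attempt establishes only the case the paper already dispenses with in two lines, and the conjecture remains unproved. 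If your goal is to produce publishable partial results rather than a full proof, the paper's actual route --- passing to the stronger average-degree form and attacking it via the refined AM--GM inequality of Theorem \ref{thm-am-gm-var} and Corollary \ref{golden-ineq} --- is the template to follow.
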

However, $|\det A(G)|\geq\lambda_1$ does not hold for all graphs. The function $x-\ln x$ is increasing for $x\geq 1$. Replacing $\lambda_1$ with $\Bar{d}$ in the above inequality gives us $n-1+\Bar{d}-\ln\Bar{d}$ as a lower bound for the energy. Hence, if $\delta(G)<\Bar{d}-\ln\Bar{d}$ for a graph $G$, then Conjecture \ref{conj1} is true for $G$. Now, we could introduce another conjecture which is a generalization of Conjecture \ref{conj1}:
\begin{conj}
Let G be a non-singular graph. Then $\mathcal{E}(G)\geq n-1+\Bar{d}$ except for $P_4$ and the following graph:\\
\begin{figure}[H]
    \centering
    \includegraphics[scale=.3]{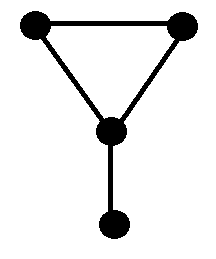}
    \label{con2pic}
\end{figure}

\label{conj2}
\end{conj}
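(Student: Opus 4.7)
The plan is to build on the sharp form of inequality (1), namely
$$\mathcal{E}(G) \;\geq\; (n-1) + \lambda_1 - \ln \lambda_1 + \ln |\det A(G)|.$$
Since $f(x) = x - \ln x$ is increasing on $[1, \infty)$ and $\lambda_1 \geq \bar d$, one has $\lambda_1 - \ln \lambda_1 \geq \bar d - \ln \bar d$, so the conjecture reduces to establishing $|\det A(G)| \geq \bar d$, or, alternatively, to exploiting any strict gap $\lambda_1 - \bar d$ to absorb a smaller determinant. This identifies the whole strategy as a hunt for lower bounds on $|\det A(G)|$ or on $\lambda_1 - \bar d$ in terms of structural parameters of $G$. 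The two exceptional graphs $P_4$ and the pictured configuration should be verified directly by computation.

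For bipartite $G$ I would exploit the symmetric spectrum $\pm \mu_1, \ldots, \pm \mu_{n/2}$ and the block factorization $A = \begin{pmatrix} 0 & B \\ B^{T} & 0 \end{pmatrix}$, which gives $|\det A| = (\det B)^2$, a nonzero perfect square, and $\mathcal{E}(G) = 2\sum_i \mu_i$. Applying $x \geq 1 + \ln x$ only to the $n/2 - 1$ positive eigenvalues other than $\mu_1$ and then doubling yields a refined form of (1) in which the determinant term carries twice the weight, which should be enough to reach $n - 1 + \bar d$. For planar $G$, Euler's formula forces $\bar d < 6$, so the target $n - 1 + \bar d \leq n + 5$ is within reach of the coarse bound $\mathcal{E}(G) \geq (n-1) + \lambda_1 - \ln \lambda_1$ combined with a finite check on the few small planar graphs where $\lambda_1$ is close to $\bar d$. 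For the regime $\bar d \leq n - 2\ln n - 3$, the hypothesis ensures $n - 1 - \bar d \geq 2\ln n + 2$, which comfortably covers the logarithmic loss $\ln \lambda_1 \leq \ln(n-1)$ incurred by (1), so the cheap estimate $|\det A| \geq 1$ already suffices.

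The main obstacle — and the reason the conjecture cannot be settled in full here — is the complementary regime of nearly regular graphs with large average degree, where $\lambda_1 \approx \bar d$ is close to $n - 1$ and $|\det A|$ is close to $1$. Here neither the spectral gap $\lambda_1 - \bar d$ nor the determinant term provides the required slack, and the pointwise inequality $x \geq 1 + \ln x$ used in (1) is wasteful precisely when most $|\lambda_i|$ cluster near a single value. Closing this case seems to require a genuinely new ingredient, for instance a structural lower bound for $|\det A(G)|$ depending on $\bar d$ rather than only on non-singularity, or a Coulson-integral / Cauchy--Schwarz style estimate on $\sum_{i \geq 2} |\lambda_i|$ that improves on the logarithmic trick when the spectrum is nearly flat.
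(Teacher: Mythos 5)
Your overall framing is accurate: the statement is a conjecture, only partial cases are within reach, and the hard regime is dense nearly-regular graphs. Your bipartite sketch is essentially the paper's argument --- pairing $\pm\lambda_1$ yields $\mathcal{E}(G)\geq n-2+2\lambda_1-2\ln\lambda_1+\ln|\det A(G)|$, so the issue is the sign of $\lambda_1-2\ln\lambda_1-1$ --- but note this quantity is negative for $1<\lambda_1<3.5$ (approximately), so you still need a separate treatment of small spectral radius; the paper supplies this via Theorem \ref{thm6.19} and a computer check for $5\leq n\leq 10$.

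The genuine gap is in your other two cases. For $\Bar{d}\leq n-2\ln n-3$ you claim that inequality (\ref{mjl}) with $|\det A(G)|\geq 1$ ``already suffices'' because $n-1-\Bar{d}\geq 2\ln n+2$ absorbs the loss $\ln\lambda_1$. It does not: (\ref{mjl}) gives $\mathcal{E}(G)\geq n-1+\lambda_1-\ln\lambda_1$ while the target is $n-1+\Bar{d}$, so the quantity that must absorb $\ln\lambda_1$ is the gap $\lambda_1-\Bar{d}$, not $n-1-\Bar{d}$ (the $n-1$ is already spent against the $n-1$ in the target). For any $k$-regular graph with $k\leq n-2\ln n-3$ one has $\lambda_1=\Bar{d}=k$ and (\ref{mjl}) falls short by exactly $\ln k$, and near-regular graphs behave the same. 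This failure also sinks your planar argument: an infinite family of cubic, $4$-regular, or $5$-regular planar graphs has $\lambda_1=\Bar{d}$ with $\lambda_1-\ln\lambda_1<\Bar{d}$, so no finite check can close the case. The paper handles precisely these graphs with the tool you reserve for the ``unresolved'' regime: the Cartwright--Field variance refinement of AM--GM (Theorem \ref{thm-am-gm-var}) applied to $|\lambda_2|,\dots,|\lambda_n|$, which converts the spectral spread $\frac{2m-\mu_1^2}{n-1}-B_1^2$ into extra energy and produces the sufficient condition of Corollary \ref{golden-ineq}; the hypothesis $\Bar{d}\leq n-2\ln n-3$ then enters through the bound $\mu_1\leq\sqrt{2m-n+1}$ and the technical Lemmas \ref{ks1} and \ref{ks2}, not through the naive estimate (\ref{mjl}).
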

Note that for every graph $G$ of order $n$, $n-1 \geq \Delta(G)$ and $\Bar{d} \geq \delta(G)$, so Conjecture \ref{conj2} is stronger than Conjecture \ref{conj1}.
\begin{rem}
\normalfont Conjecture \ref{conj2} holds for regular graphs. Let $G$ be a $k$-regular graph. Since the characteristic polynomial of $G$ has integer coefficients and is divisible by $\lambda - k$, $k\:|\:\det A(G)$. Hence, $|\det A(G)|\geq k$ and $\mathcal{E}(G)\geq n-1+k = n-1+\Bar{d}$.
\end{rem}

\begin{lem}
If $x \in (0, 7.11]$, then:
\[x\geq\frac{10}{11}+\frac{9}{11}\ln x+ \frac{x^2}{11}\]
\label{lem6.19}
\end{lem}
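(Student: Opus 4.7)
The plan is to prove the equivalent inequality
\[
f(x) \;:=\; x - \frac{10}{11} - \frac{9}{11}\ln x - \frac{x^2}{11} \;\geq\; 0 \qquad \text{on } (0, 7.11],
\]
by a standard calculus analysis of $f$. First I would differentiate, obtaining
\[
f'(x) \;=\; 1 - \frac{9}{11x} - \frac{2x}{11} \;=\; -\frac{2x^2 - 11x + 9}{11x} \;=\; -\frac{(2x-9)(x-1)}{11x}.
\]
So $f'$ vanishes exactly at $x = 1$ and $x = 9/2$, with $f' < 0$ on $(0,1) \cup (9/2, \infty)$ and $f' > 0$ on $(1, 9/2)$. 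Thus $x=1$ is a local minimum and $x = 9/2$ is a local maximum.

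Second, I would evaluate $f$ at $x = 1$: directly, $f(1) = 1 - \tfrac{10}{11} - 0 - \tfrac{1}{11} = 0$. Since $f(x) \to +\infty$ as $x \to 0^+$ (because $-\ln x \to +\infty$) and $f$ is monotone decreasing on $(0,1]$, it follows that $f \geq 0$ on $(0,1]$. Similarly $f$ is increasing on $[1, 9/2]$ starting from $f(1) = 0$, so $f \geq 0$ on $[1, 9/2]$.

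Third, on $[9/2, 7.11]$ the function $f$ is monotone decreasing, so it suffices to verify $f(7.11) \geq 0$. Using a sharp upper estimate $\ln(7.11) < 1.9615$, one computes
\[
11\, f(7.11) \;=\; 11 \cdot 7.11 - 10 - 9 \ln(7.11) - (7.11)^2 \;>\; 0,
\]
which finishes the proof.

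The constant $7.11$ is evidently chosen as a numerical approximation of the larger root of $f$, so the inequality is essentially tight at the right endpoint. The main (minor) obstacle is therefore not the qualitative shape of $f$, which is transparent from the factorization of $f'$, but ensuring that the numerical bound on $\ln(7.11)$ used at the endpoint is accurate enough to give a strictly non-negative value; everywhere else on $(0, 7.11]$ the inequality is comfortable, controlled by the single exact equality $f(1) = 0$.
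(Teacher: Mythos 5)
Your proposal is correct and follows essentially the same route as the paper: both define $f(x)=x-\tfrac{10}{11}-\tfrac{9}{11}\ln x-\tfrac{x^2}{11}$, factor $f'$ to locate the critical points $x=1$ and $x=9/2$, use $f(1)=0$ and the behaviour as $x\to 0^+$, and verify positivity at the right endpoint $x=7.11$. Your additional attention to the accuracy of the bound on $\ln(7.11)$ is a sensible refinement of the paper's numerical check, but not a different method.
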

\begin{proof}
Let $f(x)=x-\frac{10}{11}-\frac{9}{11}\ln x- \frac{x^2}{11}$. We should verify $f(x)>0$ for $\lim_{x\rightarrow 0^+ }$, $\lim_{x\rightarrow 7.11}$, and the critical points, since $f$ is a smooth function on the interval (0, 7.11]. On one hand, $\lim_{x\rightarrow 0^+ }f(x)=+\infty$. On the other hand, $\lim_{x\rightarrow 7.11 }f(x)=f(7.11) \approx 0.0004>0$.\\
For critical points we first compute $f'$:
\[f'(x) = 1 - \frac{9}{11x}-\frac{2x}{11}=\frac{11x - 9 -2x^2}{11x}=\frac{(9-2x)(x-1)}{11x}\]
As a result, $x=1,4.5$ are the only critical points. Finally, $f(1)=0\geq 0$ and $f(4.5)\approx 0.52>0$.
\end{proof}

The next result shows that Conjecture \ref{conj2} holds for graphs with maximum eigenvalue at most 7.11.
\begin{thm}
Let G be a non-singular graph of order $n\geq 5$. If $\lambda_1 \leq 7.11$, then Conjecture \ref{conj2} holds for $G$.
\label{thm6.19}
\end{thm}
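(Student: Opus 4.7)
The plan is to strengthen inequality (1) by applying Lemma 6.19 to every eigenvalue except $\lambda_1$, keeping the first term explicit so the bound stays tight on $K_n$, which is the extremal case for the conjecture. Since $A(G)$ is a non-negative symmetric matrix, $\lambda_1 = \rho(A(G))$, so the hypothesis $\lambda_1 \leq 7.11$ forces every $|\lambda_i| \leq 7.11$, and non-singularity gives $|\lambda_i| > 0$; hence each $|\lambda_i| \in (0, 7.11]$, inside the domain of Lemma 6.19. Summing the lemma over $i \geq 2$ and adding $\lambda_1$, while using $\sum_i \lambda_i^2 = 2m = n\bar{d}$ and $\prod_i |\lambda_i| = |\det A(G)|$, yields
\[
\mathcal{E}(G) \;\geq\; \frac{11\lambda_1 - \lambda_1^2 + 10(n-1) + 9\ln\bigl(|\det A(G)|/\lambda_1\bigr) + n\bar{d}}{11}.
\]
After clearing denominators, the desired inequality $\mathcal{E}(G) \geq n - 1 + \bar{d}$ is equivalent to
\[
\lambda_1(11 - \lambda_1) + 9\ln\bigl(|\det A(G)|/\lambda_1\bigr) \;\geq\; (n-1) + (11-n)\bar{d}. \tag{$\ast$}
\]

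For $n \geq 11$, I would use only the integrality bound $|\det A(G)| \geq 1$ (since $A(G)$ is an integer non-singular matrix). Substituting $\ln(|\det A(G)|/\lambda_1) \geq -\ln\lambda_1$ into the left side of $(\ast)$ and invoking Lemma 6.19 at $\lambda_1$ itself in the rearranged form $11\lambda_1 - \lambda_1^2 - 9\ln\lambda_1 \geq 10$, the left side of $(\ast)$ is at least $10$. The right side is $\leq 10$ exactly when $(n-11)(\bar{d}-1) \geq 0$, which holds because $\bar{d} \geq 1$ (non-singularity forbids isolated vertices) and $n \geq 11$.

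The hard part is the range $5 \leq n \leq 10$, where the right side of $(\ast)$ may exceed $10$ and the bound $|\det A(G)| \geq 1$ alone is too weak. For $k$-regular $G$ the preceding remark rescues us: $k \mid \det A(G)$, so $|\det A(G)| \geq k = \lambda_1 = \bar{d}$, the logarithmic term in $(\ast)$ is non-negative, and $(\ast)$ collapses to $\bar{d}(n-\bar{d}) \geq n-1$, which is valid for $1 \leq \bar{d} \leq n-1$ and tight on $K_n$. The principal obstacle I expect is the non-regular subcase for small $n$; there $\lambda_1 > \bar{d}$ strictly, and I would try to close the gap by quantifying this excess through a Hofmeister-type inequality $\lambda_1^2 \geq \tfrac{1}{n}\sum_i d_i^2$, converting degree-sequence variance into enough spectral slack to absorb the logarithmic deficit, and falling back on a finite case check over the bounded family permitted by $\lambda_1 \leq 7.11$ and $n \leq 10$ if a uniform argument does not materialise.
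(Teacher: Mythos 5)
Your proposal is correct and takes essentially the same route as the paper: apply Lemma \ref{lem6.19} to the eigenvalues, use $\sum_i \lambda_i^2 = 2m$ and $|\det A(G)|\geq 1$ to reduce the case $n\geq 11$ to the trivial inequality $2m\geq n$, and fall back on a finite computer check for $5\leq n\leq 10$ (which is exactly what the paper does for those orders). Keeping $\lambda_1$ outside the sum and reapplying the lemma at $x=\lambda_1$ afterwards is algebraically identical to the paper's summation over all $n$ eigenvalues, so the extra discussion of regular graphs and a Hofmeister-type bound, while harmless, is not needed.
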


\begin{proof}
If $\lambda_1 \leq 7.11$, then $|\lambda_i| \leq 7.11$ for each $i$. Using Lemma \ref{lem6.19}, for each eigenvalue $\lambda_i$, we have,
\[|\lambda_i|\geq \frac{10}{11}+\frac{9}{11}\ln|\lambda_i|+\frac{\lambda_i^2}{11}\]

If $|E(G)| = m$, then sum over $i$ implies that:
\begin{equation*}
    \begin{split}
        \mathcal{E}(G) & = \sum_{i=1}^n |\lambda_i| \\
        \geq & \frac{10n}{11} + \frac{9}{11}\ln|\det A(G)| + \frac{\sum_{i=1}^n \lambda_i^2}{11}\\
        = & \frac{10n}{11} + \frac{9}{11}\ln|\det A(G)| + \frac{2m}{11} \\
        \geq & \frac{10n}{11}+\frac{2m}{11}\ \mathrm{(Since}\ A(G)\ \mathrm{is}\ \mathrm{non}\textnormal{-}\mathrm{singular,} |\det A(G)|\geq 1)\\
        = & n-1+\frac{2m}{n} + \frac{(n-11)(2m-n)}{11n}\\
        = & n-1+\Bar{d}+\frac{(n-11)(2m-n)}{11n}.
    \end{split}
\end{equation*}
For a graph with no isolated vertex, $2m\geq n$. Thus, if $n\geq 11$, then $\mathcal{E}(G)\geq n - 1 + \Bar{d}$. For $5\leq n\leq 10$ we checked the assertion by computer.
\end{proof}

\begin{cor}
Let G be a non-singular graph of order $n$ and size $m$. If $m\leq 2.574n$, then Conjecture \ref{conj2} holds.
\label{cor2.1}
\end{cor}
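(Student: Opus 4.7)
The plan is to split on the value of $\lambda_1$ and glue Theorem \ref{thm6.19} together with the crude bound from inequality (\ref{mjl}). Specifically, if $\lambda_1 \leq 7.11$, the conclusion is immediate from Theorem \ref{thm6.19}. The interesting case is $\lambda_1 > 7.11$, where I would fall back on (\ref{mjl}).

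For that case, inequality (\ref{mjl}) combined with non-singularity of $A(G)$ (so $|\det A(G)| \geq 1$ and $\ln|\det A(G)| \geq 0$) yields
\begin{equation*}
    \mathcal{E}(G) \geq n - 1 + \lambda_1 - \ln \lambda_1.
\end{equation*}
Since the function $x \mapsto x - \ln x$ is increasing on $[1,\infty)$, I would use $\lambda_1 > 7.11$ to bound
\begin{equation*}
    \lambda_1 - \ln \lambda_1 > 7.11 - \ln(7.11) \approx 5.1484.
\end{equation*}
On the other hand, the hypothesis $m \leq 2.574 n$ translates directly into $\Bar{d} = 2m/n \leq 5.148$. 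Comparing these two numerics gives $\lambda_1 - \ln \lambda_1 > \Bar{d}$, whence $\mathcal{E}(G) \geq n - 1 + \Bar{d}$, as desired.

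The proof is really a single-line observation once the case split is made, and the constant $2.574$ in the statement is evidently reverse-engineered so that $5.148$ sits just below $7.11 - \ln(7.11)$. The only thing to be careful about is the numerical comparison between $7.11 - \ln(7.11)$ and $2 \cdot 2.574$, which is tight to three decimal places; I would record the inequality $7.11 - \ln(7.11) > 5.148$ explicitly. I do not anticipate any real obstacle beyond this sanity check and a note that the small-$n$ regime ($n \geq 5$) is already covered by Theorem \ref{thm6.19}'s computer verification whenever $\lambda_1 \leq 7.11$.
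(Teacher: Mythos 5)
Your proposal is correct and follows essentially the same route as the paper: the identical case split on $\lambda_1 \lessgtr 7.11$, invoking Theorem \ref{thm6.19} in the first case and the bound $\mathcal{E}(G) \geq n-1+\lambda_1-\ln\lambda_1$ with the numerical comparison $7.11-\ln(7.11)>5.148\geq \Bar{d}$ in the second. Nothing to add.
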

\begin{proof}
First, note that the function $x-\ln x$ is an increasing function for $x>1$ and $7.11-\ln 7.11\approx 5.1485>5.148$. Hence, $x-\ln x>5.148$, for $x>7.11$. If $\lambda_1 \leq 7.11$, then Theorem \ref{thm6.19} implies the validity of Conjecture \ref{conj2}. Otherwise, $\lambda_1>7.11$ and it follows from the known lower bound for energy that:
\begin{equation*}
    \begin{split}
        \mathcal{E}(G)  \geq & n - 1 + \lambda_1  - \ln\lambda_1 + \ln|\det A(G)| \\
        \geq & n - 1 + \lambda_1 - \ln\lambda_1 \\
        \geq & n - 1 + 5.148 \\
        \geq & n - 1 + \frac{2m}{n}\\
        = & n - 1 + \Bar{d}.
    \end{split}
\end{equation*}

\end{proof}
Now, we are in a position to prove Conjecture \ref{conj2} for bipartite graphs.
\begin{thm}
Conjecture \ref{conj2} holds for bipartite graphs.
\label{thm-bipartite}
\end{thm}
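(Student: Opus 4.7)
The plan is to exploit the symmetry of the spectrum of a bipartite graph, namely that the eigenvalues come in pairs $\pm\lambda$, so in particular $|\lambda_n|=\lambda_1$. Since $A(G)$ is non-singular, $|\det A(G)|$ is a non-zero integer, hence $|\det A(G)|\geq 1$. I will split on the size of $\lambda_1$: if $\lambda_1\leq 7.11$ the claim is already covered by Theorem \ref{thm6.19}, so I only need to treat the case $\lambda_1>7.11$.

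Assume $\lambda_1>7.11$. The idea is to pull out both of the extremal eigenvalues $\lambda_1$ and $\lambda_n=-\lambda_1$ from the energy sum, and apply the inequality $|\lambda_i|\geq 1+\ln|\lambda_i|$ to the remaining $n-2$ eigenvalues. This gives
\begin{equation*}
\mathcal{E}(G)\;=\;2\lambda_1+\sum_{i=2}^{n-1}|\lambda_i|\;\geq\;2\lambda_1+(n-2)+\ln\!\frac{|\det A(G)|}{\lambda_1^{2}}\;\geq\;2\lambda_1-2\ln\lambda_1+n-2,
\end{equation*}
using $|\det A(G)|\geq 1$ in the last step. To conclude $\mathcal{E}(G)\geq n-1+\bar d$ it therefore suffices to establish the scalar inequality $2\lambda_1-2\ln\lambda_1-1\geq\bar d$.

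To handle this scalar inequality, I will use the standard bound $\lambda_1\geq\bar d$ [Proposition 3.1.2 of \cite{Spectraofgraphs}], so it is enough to check $\lambda_1-2\ln\lambda_1-1\geq 0$ whenever $\lambda_1\geq 7.11$. The function $g(x)=x-2\ln x-1$ has derivative $1-2/x$, hence is increasing on $[2,\infty)$, and a direct evaluation gives $g(7.11)>0$; so $g(\lambda_1)\geq 0$ throughout the range $\lambda_1>7.11$, completing the proof.

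I do not foresee a serious obstacle here: the whole argument is driven by the identity $|\lambda_n|=\lambda_1$ in the bipartite case, which effectively doubles the contribution of the leading eigenvalue and leaves enough slack to absorb the logarithmic correction. The only point needing mild care is the numerical verification of $g(7.11)>0$ and the fact that for non-singular bipartite graphs $n$ is automatically even (since odd-order bipartite graphs always have $0$ as an eigenvalue), so the pairing $\lambda_i=-\lambda_{n+1-i}$ is genuine and no extra unpaired eigenvalue intervenes.
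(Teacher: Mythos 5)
Your proposal is correct and follows essentially the same route as the paper: pull out the pair $\lambda_1$ and $\lambda_n=-\lambda_1$ guaranteed by bipartiteness, apply $|\lambda_i|\geq 1+\ln|\lambda_i|$ to the remaining eigenvalues together with $|\det A(G)|\geq 1$ and $\lambda_1\geq\bar d$, reduce to $\lambda_1-2\ln\lambda_1-1\geq 0$ for $\lambda_1>7.11$, and dispatch the case $\lambda_1\leq 7.11$ via Theorem \ref{thm6.19}. The only cosmetic difference is your closing remark about even order and the full spectral pairing, which the argument does not actually need.
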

\begin{proof}
Since $G$ is bipartite, by [Proposition 3.4.1, p. 38] of \cite{Spectraofgraphs} $-\lambda_1$ is an eigenvalue of $G$. We also have $\lambda_1 \geq  \Bar{d}$, therefore the following holds:

\begin{equation*}
    \begin{split}
        \mathcal{E}(G)& =  \lambda_1 + |-\lambda_1| + \sum_{i=2}^{n-1}|\lambda_i| \\
        & \geq n - 2 + 2\lambda_1 + \ln|\det A(G)| - 2\ln\lambda_1\\
        & \geq n - 2 + 2\lambda_1 - 2\ln\lambda_1 \\
        & = n - 1 + \lambda_1 + \lambda_1 - 2\ln\lambda_1-1\\
        & \geq n - 1 + \Bar{d} + \lambda_1 - 2\ln\lambda_1-1.
    \end{split}
\end{equation*}
Therefore, it would be enough to show that $\lambda_1 - 2\ln\lambda_1-1>0$. The function $f(x)=x-2\ln x - 1$ is increasing for $x>2$ and $f(7.11)\approx2.19>0$. Hence, for $\lambda_1\geq7.11$ the assertion is proved. For $\lambda_1 \leq 7.11$, by Theorem \ref{thm6.19}, Conjecture \ref{conj2} holds
\end{proof}

\section{A strong lower bound for the energy of non-singular graphs}
Another way to make a lower bound for the graph energy is using AM-GM inequality: 

\begin{equation}\label{ineq-am-gm-simple}
    \begin{split}
        \mathcal{E}(G)=\lambda_1 + \sum_{i=2}^n|\lambda_i| & \geq  \lambda_1 + (n-1) \sqrt[n-1]{ |\lambda_2\lambda_3\cdots\lambda_n|}\\
        & =  \lambda_1 +(n-1) \sqrt[n-1]{\frac{|\det A(G)|}{\lambda_1}}.
    \end{split}
\end{equation}

 Equation (\ref{ineq-am-gm-simple}) is stronger than (\ref{mjl}), since by substituting $x=\sqrt[n-1]{\frac{|\det A(G)|}{\lambda_1}}$ in $x\geq \ln x + 1$, we can get (\ref{mjl}) from (\ref{ineq-am-gm-simple}).  In this section we use a stronger version of AM-GM to improve the bound. First, we point out this version here: 

\begin{thm}
Suppose that $x_1,x_2,\ldots, x_n$ are real positive numbers and $b=\max(x_1,x_2,\ldots,x_n)$. Then, we have: 
\[\frac{x_1+x_2+\cdots+x_n}{n} - \sqrt[n]{x_1x_2\cdots x_n}\geq \frac{1}{2b}\left(\frac{\sum_{i=1}^n x_i^2}{n}-(\frac{\sum_{i=1}^n x_i}{n})^2\right).\]
\label{thm-am-gm-var}
\end{thm}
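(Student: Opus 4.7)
The plan is to reduce the theorem to a single-variable inequality and then apply it termwise after normalizing by the geometric mean. Let $A=\frac{1}{n}\sum x_i$ and $G=\sqrt[n]{x_1\cdots x_n}$, and recall that $G\le b$. The target rewrites as
\[
n(A-G)\ \ge\ \frac{1}{2b}\sum_{i=1}^n (x_i-A)^2.
\]
Since $A$ is the minimizer of $c\mapsto\sum_i(x_i-c)^2$, it suffices to prove the stronger statement
\[
n(A-G)\ \ge\ \frac{1}{2b}\sum_{i=1}^n (x_i-G)^2.
\]

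The key one-variable lemma I would first establish is: for every $y>0$,
\[
\ln y\ \le\ y-1-\frac{(y-1)^2}{2\max(1,y)}.
\]
This splits naturally into two cases. For $y\ge 1$, consider $g(y)=y-1-\frac{(y-1)^2}{2y}-\ln y$; a direct computation shows $g'(y)=\frac{(y-1)^2}{2y^2}\ge 0$, and since $g(1)=0$ we get $g\ge0$ on $[1,\infty)$. For $0<y\le 1$, consider $h(y)=y-1-\frac{(y-1)^2}{2}-\ln y$; then $h'(y)=-\frac{(y-1)^2}{y}\le 0$ and $h(1)=0$, so $h\ge 0$ on $(0,1]$.

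Next I would apply this lemma with $y_i=x_i/G$. Because $\prod_i (x_i/G)=1$, summing the logs gives $\sum_i \ln(x_i/G)=0$, so summing the inequality yields
\[
0\ \le\ \sum_{i=1}^n\left(\frac{x_i}{G}-1\right)\ -\ \sum_{i=1}^n\frac{(x_i/G-1)^2}{2\max(1,x_i/G)}.
\]
The first sum is $\frac{nA}{G}-n$. For the second sum I would use the uniform bound $\max(1,x_i/G)\le b/G$ (valid since $G\le b$ and $x_i\le b$), which replaces the varying denominator with a single $b/G$ and gives
\[
\frac{nA-nG}{G}\ \ge\ \frac{G}{2b}\sum_{i=1}^n\Big(\frac{x_i}{G}-1\Big)^2 \ =\ \frac{1}{2bG}\sum_{i=1}^n(x_i-G)^2.
\]
Multiplying through by $G$ and invoking the minimizing property of $A$ mentioned above closes the argument and yields the stated inequality.

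The only delicate step is the one-variable lemma, since the $\max(1,y)$ in the denominator forces a case split; the rest is bookkeeping. The choice to expand around $G$ rather than $A$ is important, because it is the identity $\sum\ln(x_i/G)=0$ that lets the zeroth-order and logarithmic terms cancel and exposes the variance-like second-order term.
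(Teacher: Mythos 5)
Your proof is correct, but it takes a genuinely different route from the paper. The paper does not prove the refined AM--GM inequality from scratch: it simply invokes the result of Cartwright and Field (their 1978 refinement, specialized to equal weights $p_k=1/n$), which directly yields
\[
\frac{x_1+\cdots+x_n}{n}-\sqrt[n]{x_1\cdots x_n}\;\ge\;\frac{1}{2b}\cdot\frac{1}{n}\sum_{i=1}^n\bigl(x_i-\bar x\bigr)^2,
\]
and then the entire content of the paper's ``proof'' is the routine expansion of the variance, $\frac{1}{n}\sum_i(x_i-\bar x)^2=\frac{1}{n}\sum_i x_i^2-\bar x^2$. You instead give a self-contained argument: the one-variable inequality $\ln y\le y-1-\frac{(y-1)^2}{2\max(1,y)}$ (whose two-case verification checks out: $g'(y)=\frac{(y-1)^2}{2y^2}$ on $[1,\infty)$ and $h'(y)=-\frac{(y-1)^2}{y}$ on $(0,1]$, both with value $0$ at $y=1$), applied at $y_i=x_i/G$ so that $\sum_i\ln y_i=0$, followed by the uniform bound $\max(1,x_i/G)\le b/G$ and the observation that the mean minimizes $c\mapsto\sum_i(x_i-c)^2$, which lets you pass from $\sum_i(x_i-G)^2$ back to $\sum_i(x_i-A)^2$. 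Every step is sound. What your approach buys is independence from the cited reference and transparency about where the factor $\frac{1}{2b}$ comes from; what the paper's approach buys is brevity and, in principle, access to the more general weighted version of Cartwright--Field, which the paper does not actually need. Your centering at $G$ rather than $A$ is exactly the right move, and your remark explaining why ($\sum_i\ln(x_i/G)=0$ kills the zeroth- and first-order log terms) is the key insight of the argument.
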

\begin{proof}
Let $\Bar{x}=\frac{x_1+x_2+\cdots+x_n}{n}$ and set $p_k=\frac{1}{n}$ in  \cite{cartwright1978refinement}. It implies that,
\[\frac{x_1+x_2+\cdots+x_n}{n} - \sqrt[n]{x_1x_2\cdots x_n}\geq \frac{1}{2b}\left(\frac{\sum_{i=1}^n(x_i-\Bar{x})^2}{n}\right).\]

Note that the right hand side is the variance of $x_1,x_2,\ldots,x_n$:
\begin{equation*}
    \begin{split}
        \frac{1}{2b}\left(\frac{\sum_{i=1}^n(x_i-\Bar{x})^2}{n}\right)=&\frac{1}{2bn}\sum_{i=1}^n(x_i^2-2\Bar{x}x_i+\Bar{x}^2)\\
        = & \frac{1}{2bn}(\sum_{i=1}^n x_i^2 - 2\Bar{x}(\sum_{i=1}^n x_i)+n\Bar{x}^2)\\
        = & \frac{1}{2bn}(\sum_{i=1}^n x_i^2 - 2n\Bar{x}^2+n\Bar{x}^2)\\
        = & \frac{1}{2b}\left(\frac{\sum_{i=1}^n x_i^2}{n}-\Bar{x}^2\right)\\
        = & \frac{1}{2b}\left(\frac{\sum_{i=1}^n x_i^2}{n}-(\frac{\sum_{i=1}^n x_i}{n})^2\right).
    \end{split}
\end{equation*}
\end{proof}

\begin{thm}\label{strong-bound}
Let $G$ be a non-singular graph of order $n$ and size $m$. Then the following holds:
\begin{equation}
    \mathcal{E}(G)\geq \mu_1 + (n-1)\left(\sqrt{\mu_2^2+2\mu_2\sqrt[n-1]{\frac{|\det A(G)|}{\mu_1}}+\frac{2m-\mu_1^2}{n-1}}-\mu_2 \right)
\end{equation}

\end{thm}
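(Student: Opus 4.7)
The plan is to apply the refined AM--GM inequality of Theorem \ref{thm-am-gm-var} to the $n-1$ positive numbers $|\lambda_2|, |\lambda_3|, \ldots, |\lambda_n|$ (positivity uses that $G$ is non-singular), and then solve the resulting quadratic inequality in the average $\frac{1}{n-1}\sum_{i=2}^n |\lambda_i|$.

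First I would identify the three key quantities. The maximum of $|\lambda_2|, \ldots, |\lambda_n|$ is by definition $\mu_2$. The product telescopes: since $\det A(G) = \prod_{i=1}^n \lambda_i$, we have $\prod_{i=2}^n |\lambda_i| = |\det A(G)|/\mu_1$, so the geometric mean becomes $\sqrt[n-1]{|\det A(G)|/\mu_1}$. Finally, since the trace of $A(G)^2$ equals $2m$, we have $\sum_{i=2}^n \lambda_i^2 = 2m - \lambda_1^2 = 2m - \mu_1^2$. These identifications produce the three ingredients that appear under the square root in the statement.

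Plugging these into Theorem \ref{thm-am-gm-var} gives an inequality of the shape
\[
t - Q \;\geq\; \frac{1}{2\mu_2}\!\left(V - t^2\right),
\]
where I set $t := \frac{1}{n-1}\sum_{i=2}^n |\lambda_i|$, $Q := \sqrt[n-1]{|\det A(G)|/\mu_1}$, and $V := \frac{2m-\mu_1^2}{n-1}$. Multiplying by $2\mu_2 > 0$ and rearranging yields $t^2 + 2\mu_2 t - 2\mu_2 Q - V \geq 0$, which I complete the square to $(t+\mu_2)^2 \geq \mu_2^2 + 2\mu_2 Q + V$. Since $t+\mu_2 > 0$, I can take the positive square root to obtain $t \geq \sqrt{\mu_2^2 + 2\mu_2 Q + V} - \mu_2$.

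Multiplying through by $n-1$ and adding $\mu_1$ (i.e.\ adding the $i=1$ contribution back to the energy) gives exactly the claimed bound. There is no real obstacle: the only two points that require care are (i) verifying positivity of $t+\mu_2$ before taking the square root, which is automatic since both summands are positive, and (ii) noting that the quantity under the radical is nonnegative; this is forced by the inequality $(t+\mu_2)^2 \geq \mu_2^2 + 2\mu_2 Q + V$ itself, so no separate argument is needed.
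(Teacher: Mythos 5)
Your proposal is correct and follows essentially the same route as the paper: apply the refined AM--GM inequality of Theorem \ref{thm-am-gm-var} to $|\lambda_2|,\ldots,|\lambda_n|$, identify the maximum, geometric mean, and mean of squares as $\mu_2$, $\sqrt[n-1]{|\det A(G)|/\mu_1}$, and $\frac{2m-\mu_1^2}{n-1}$, and solve the resulting quadratic inequality in the average $\frac{1}{n-1}\sum_{i=2}^{n}|\lambda_i|$. Your explicit completion of the square and the remark on the sign of $t+\mu_2$ merely spell out the step the paper summarizes as ``solving the resulted quadratic inequality.''
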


\begin{proof}
We use Theorem \ref{thm-am-gm-var}. Set $\{x_1,x_2,\ldots, x_{n-1}\} = \{|\lambda_2|,|\lambda_3|,\ldots,|\lambda_n|\}$. For simplicity we define $B_1=\frac{\sum_{i=2}^{n}|\lambda_i|}{n-1}$ and $B_2=\frac{\sum_{i=2}^{n}\lambda_i^2}{n-1}$. Note that $B_1$ is the arithmetic mean of $\{|\lambda_2|,|\lambda_3|,\ldots,|\lambda_n|\}$ and their geometric mean is: $\sqrt[n-1]{|\lambda_2\lambda_3\cdots\lambda_n|}=\sqrt[n-1]{\frac{|\det A(G)|}{\mu_1}}$. As $\lambda_1$ does not appear in this set, we know that $\mu_2=\max\{|\lambda_2|,|\lambda_3|,\ldots,|\lambda_n|\}$.  Using Theorem 3.1:
\[
        B_1 - \sqrt[n-1]{\frac{|\det A(G)|}{\mu_1}} \geq \frac{1}{2\mu_2}(B_2 - B_1^2)=\frac{1}{2\mu_2}(\frac{2m-\mu_1^2}{n-1}-B_1^2).
\]
Therefore,
\[\frac{1}{2\mu_2}B_1^2+B_1-\sqrt[n-1]{\frac{|\det A(G)|}{\mu_1}}-\frac{2m-\mu_1^2}{2\mu_2(n-1)}\geq 0
\]
Solving the resulted quadratic inequality above for $B_1$, we have the following result:
\[B_1\geq  \sqrt{\mu_2^2+2\mu_2\sqrt[n-1]{\frac{|\det A(G)|}{\mu_1}}+\frac{2m-\mu_1^2}{n-1}}-\mu_2.\]
Finally, the equality $\mathcal{E}(G)=\mu_1+(n-1)B_1$ gives the result.
\end{proof}

In Theorem \ref{strong-bound}, let

\[C=\sqrt{\mu_2^2+2\mu_2\sqrt[n-1]{\frac{|\det A(G)|}{\mu_1}}+\frac{2m-\mu_1^2}{n-1}}-\mu_2\]

If $C\geq 1$ then,
\[\mathcal{E}(G)\geq \mu_1+n-1\geq \Bar{d}+n-1\]
and Conjecture \ref{conj2} holds for $G$.\\

Now, let $C\leq 1$.  We are going to refine the lower bound multiplying by its conjugate:
\begin{equation*}
\begin{split}
    \mathcal{E}(G)\geq & \mu_1+(n-1)\left(\sqrt{\mu_2^2+2\mu_2\sqrt[n-1]{\frac{|\det A(G)|}{\mu_1}}+\frac{2m-\mu_1^2}{n-1}}-\mu_2\right)\\
    = & \mu_1 + (n-1)\left(\frac{2\mu_2\sqrt[n-1]{\frac{|\det A(G)|}{\mu_1}}+\frac{2m-\mu_1^2}{n-1}}{C+2\mu_2}\right)
\end{split}
\end{equation*}

Suppose $G$ is non-singular. Then, $|\det A(G)|\geq 1$. Using the inequality $x\geq \ln x + 1$ for $x=\sqrt[n-1]{\frac{|\det A(G)|}{\mu_1}}$:
\begin{equation}
    \begin{split}
       \mathcal{E}(G) \geq & \mu_1 + (n-1)\left(\frac{2\mu_2(1+\frac{\ln|\det A(G)|-\ln\mu_1}{n-1})+\frac{2m-\mu_1^2}{n-1}}{C+2\mu_2}\right)\\
    \geq & \mu_1 + \frac{2\mu_2(n-1)-2\mu_2\ln\mu_1+2m-\mu_1^2}{1+2\mu_2}\\
    = & n-1 + \left(\mu_1 + \frac{2m-\mu_1^2-n+1-2\mu_2\ln\mu_1}{1+2\mu_2}\right)
    \end{split}
    \label{C1}
\end{equation}
This concludes the following corollary:
\begin{cor}
Let $G$ be a non-singular graph. If
\[(\mu_1-\Bar{d})+ (\frac{2m-n+1-\mu_1^2}{2\mu_2+1})\geq\frac{2\mu_2}{2\mu_2+1}\ln\mu_1,\]
then Conjecture \ref{conj2} holds. 
\label{golden-ineq}
\end{cor}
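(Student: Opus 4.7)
The plan is to read off the corollary directly from the chain of inequalities that the excerpt already established in display (\ref{C1}). There are two cases built into the discussion preceding the corollary, and the proposal first disposes of the easy case and then does the algebraic rearrangement in the hard case.

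First I would recall the dichotomy on $C$ sketched just before the statement. If $C \geq 1$, the excerpt already observes that $\mathcal{E}(G) \geq \mu_1 + n - 1 \geq \bar{d} + n - 1$, so Conjecture \ref{conj2} holds unconditionally in that case and the hypothesis of the corollary is irrelevant. So it suffices to treat the complementary case $C \leq 1$, where the conjugation trick gives the refined bound displayed in (\ref{C1}):
\[
\mathcal{E}(G) \;\geq\; n - 1 + \left(\mu_1 + \frac{2m - \mu_1^2 - n + 1 - 2\mu_2 \ln \mu_1}{1 + 2\mu_2}\right).
\]

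Next I would reduce the desired conclusion $\mathcal{E}(G) \geq n - 1 + \bar{d}$ to showing that the parenthesised quantity above is at least $\bar{d}$. Subtracting $\bar{d}$ from both sides and isolating the logarithmic term on the right, this is equivalent to
\[
(\mu_1 - \bar{d}) + \frac{2m - n + 1 - \mu_1^2}{2\mu_2 + 1} \;\geq\; \frac{2\mu_2}{2\mu_2 + 1}\,\ln \mu_1,
\]
which is exactly the hypothesis of the corollary. Thus the proof is essentially a one-line algebraic rearrangement of (\ref{C1}).

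There is no real obstacle here beyond bookkeeping: the substantive work has been done earlier in proving Theorem \ref{strong-bound} and in the passage that produces (\ref{C1}) via the $x \geq \ln x + 1$ inequality and the conjugate-multiplication refinement. The only thing to be mildly careful about is that the denominator $1 + 2\mu_2$ in (\ref{C1}) arises precisely from bounding $C + 2\mu_2$ above by $1 + 2\mu_2$, which is legitimate exactly under the hypothesis $C \leq 1$ of this case; this is why merging the two cases into a single conditional statement is clean. I would conclude by writing out the equivalence explicitly and noting that it completes the proof in both cases.
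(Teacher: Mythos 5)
Your proposal is correct and follows essentially the same route as the paper, which states the corollary as an immediate consequence of the dichotomy on $C$ and the display (\ref{C1}) rather than giving a separate proof. The case split on $C$ and the one-line rearrangement showing that the parenthesised term in (\ref{C1}) exceeds $\Bar{d}$ exactly under the stated hypothesis is precisely what the paper intends.
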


The condition of Corollary \ref{golden-ineq} is interesting. We know that $\Bar{d}\leq\mu_1$. On the other hand, in \cite{yuan1988bound} it was shown that $\mu_1^2\leq 2m-n+1$ for a graph with no isolated vertex. Corollary \ref{golden-ineq} states that in order to prove Conjecture \ref{conj2}, it is sufficient to give an appropriate lower bound on the sum of errors of $\mu_1-\Bar{d}$ and $\frac{2m-n+1-\mu_1^2}{2\mu_2+1}$. We prove Cojecture \ref{conj2} for a wide range of graphs using this idea.
\begin{lem}
Let b,c,d be real numbers and $c\geq 0$. Then, the following function: 
\[f(x)=(2c+1)(x-d)+(b-x^2)-2c(\ln x)\]
is decreasing for $x\in[c,\infty)$.
\label{lem-decreasing}
\end{lem}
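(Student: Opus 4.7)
The natural strategy is to show that $f'(x) \le 0$ throughout $[c,\infty)$, so that $f$ is (weakly) decreasing there. Differentiating term by term,
\[
f'(x) = (2c+1) - 2x - \frac{2c}{x},
\]
and since $x>0$ on the interval under consideration, the sign of $f'(x)$ agrees with that of
\[
x f'(x) = -2x^{2} + (2c+1)x - 2c,
\]
a downward-opening quadratic in $x$. The problem therefore reduces to showing that this quadratic is non-positive for every $x \ge c$.

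My plan is to test the quadratic at the relevant distinguished points. First I would evaluate at the left endpoint: $x f'(x)\bigl|_{x=c} = -2c^{2} + (2c+1)c - 2c = -c \le 0$. Next I would locate the vertex $x^{*} = (2c+1)/4$ and split into two cases according to its position relative to $c$. If $c \ge 1/2$, then $x^{*} \le c$, so the quadratic is already past its maximum at $x=c$ and decreases on $[c,\infty)$; combined with the endpoint value $-c$, this gives $xf'(x) \le -c \le 0$ throughout. If $c < 1/2$ then $x^{*}$ lies strictly inside $[c,\infty)$, so the maximum on the interval is attained at the vertex, and a short computation yields $xf'(x)\bigl|_{x=x^{*}} = (4c^{2}-12c+1)/8$, which must then be verified to be non-positive on the range of $c$ that the lemma is actually applied to.

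The step I expect to require the most care is this second, sub-unit case: the vertex value $(4c^{2}-12c+1)/8$ changes sign at $c = (3 \pm 2\sqrt{2})/2$, so checking its non-positivity ties the algebra to the concrete setting in which the lemma will be invoked (where $c$ will play the role of $\mu_{2}$, the second-largest absolute eigenvalue of a non-singular graph). Aside from this sign analysis, the argument is a routine single-variable calculus exercise: once $xf'(x) \le 0$ is established on $[c,\infty)$, the inequality $f'(x) \le 0$ there and hence the monotonicity of $f$ follow at once.
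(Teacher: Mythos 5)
Your calculus is correct, and your method is essentially the paper's in a different arrangement: both arguments reduce to showing that the downward-opening quadratic $xf'(x)=-2x^{2}+(2c+1)x-2c$ is non-positive on $[c,\infty)$. The paper writes its roots as $\frac{2c+1\pm\sqrt{4c^{2}-12c+1}}{4}$ and bounds the larger one by $c$ using $\sqrt{4c^{2}-12c+1}\leq 2c-1$; you instead evaluate the quadratic at the endpoint $x=c$ (getting $-c\leq 0$) and at the vertex $x^{*}=(2c+1)/4$. Your split at $c=1/2$ is exactly the hypothesis the paper silently needs, since $\sqrt{4c^{2}-12c+1}\leq 2c-1$ is only legitimate when $2c-1\geq 0$.

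The sub-case you leave open is not a gap in your argument so much as a flaw in the lemma: with only $c\geq 0$ assumed, the statement is false. For $0\leq c<\frac{3-2\sqrt{2}}{2}\approx 0.086$ the vertex lies in $(c,\infty)$ and the vertex value $(4c^{2}-12c+1)/8$ is positive, so $f$ is increasing on a subinterval of $[c,\infty)$; concretely, for $c=0$ one has $f'(x)=1-2x>0$ on $[0,1/2)$. So non-positivity cannot be ``verified'' for all $c\geq 0$. To finish, you must add a restriction: for $\frac{3-2\sqrt{2}}{2}\leq c<1/2$ the discriminant $4c^{2}-12c+1$ is non-positive, so $f'$ has no real roots and is everywhere negative (this closes your second case), while for $c<\frac{3-2\sqrt{2}}{2}$ the lemma simply fails. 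None of this damages the paper downstream: in the only application (Theorem \ref{bound-dbar}) one has $c=\mu_{2}$, and Yuan's bound $\mu_{1}^{2}\leq 2m-n+1$ gives $\sum_{i\geq 2}\lambda_{i}^{2}\geq n-1$, hence $\mu_{2}\geq 1>1/2$, so only your first case is ever needed. State the restriction $c\geq\frac{3-2\sqrt{2}}{2}$ (or $c\geq 1/2$) explicitly and your proof is complete --- and, as a bonus, your bookkeeping exposes an error in the lemma as stated and in the paper's own proof of it.
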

\begin{proof}
We have $f'(x)=2c+1-2x-\frac{2c}{x}$. Therefore, the roots of $
f'$ are $\frac{2c+1\pm\sqrt{4c^2-12c+1}}{4}$. As $c\geq0$, then $4c^2-12c+1\leq(2c-1)^2$. Thus, 
\[\frac{2c+1+\sqrt{4c^2-12c+1}}{4}\leq \frac{2c+1+2c-1}{4}=c.\]
As a result, in case of being real, both roots of $f'(x)$ are less than equal $c$. Hence $f'(x)$ is negative for $x\geq c$ i.e. $f$ is decreasing on $[c,\infty)$. 
\end{proof}

The following lemma is easy to prove.
\begin{lem}
Let $x\geq 13$, then $\frac{2(x-1)}{\sqrt{x}} - 4 - \ln x \geq 0$.
\label{ks1}
\end{lem}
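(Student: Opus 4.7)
The plan is to set $g(x) = \frac{2(x-1)}{\sqrt{x}} - 4 - \ln x$, rewrite it in a more convenient form, then show (i) $g$ is increasing on the relevant interval and (ii) $g(13) \geq 0$. Combining these two facts immediately gives $g(x) \geq 0$ for all $x \geq 13$.

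First, I would rewrite
\[
g(x) = 2\sqrt{x} - \frac{2}{\sqrt{x}} - 4 - \ln x,
\]
which makes differentiation trivial. A direct computation yields
\[
g'(x) = \frac{1}{\sqrt{x}} + \frac{1}{x^{3/2}} - \frac{1}{x}.
\]
Multiplying through by $x^{3/2} > 0$, the sign of $g'(x)$ matches that of $x + 1 - \sqrt{x}$. Setting $u = \sqrt{x}$, this equals $u^2 - u + 1 = (u - 1/2)^2 + 3/4 > 0$. Hence $g'(x) > 0$ on all of $(0,\infty)$, so $g$ is strictly increasing.

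Next I would evaluate $g$ at the endpoint $x = 13$. Using $\sqrt{13} \approx 3.6056$ and $\ln 13 \approx 2.5649$, one gets
\[
g(13) = 2\sqrt{13} - \tfrac{2}{\sqrt{13}} - 4 - \ln 13 \approx 7.2111 - 0.5547 - 4 - 2.5649 \approx 0.09 > 0.
\]
Monotonicity then yields $g(x) \geq g(13) > 0$ for all $x \geq 13$, which is the claim.

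There is no real obstacle here; the only mildly delicate point is the numerical verification at $x=13$, since the margin ($\approx 0.09$) is small. If one wanted to avoid decimals altogether, one could instead verify the stronger algebraic inequality $2\sqrt{13} - \tfrac{2}{\sqrt{13}} \geq 4 + \ln 13$ by squaring (after rearranging) and using a rational upper bound such as $\ln 13 \leq \ln 16 = 4\ln 2 < 2.773$, but the decimal check suffices.
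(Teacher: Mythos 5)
Your proof is correct: the rewriting $g(x)=2\sqrt{x}-2/\sqrt{x}-4-\ln x$, the observation that $x^{3/2}g'(x)=x-\sqrt{x}+1=(\sqrt{x}-1/2)^2+3/4>0$, and the endpoint check $g(13)\approx 0.09>0$ together give the claim. The paper itself offers no argument (it only remarks that the lemma ``is easy to prove''), and your monotonicity-plus-endpoint computation is exactly the natural proof it leaves implicit, so there is nothing substantive to compare.
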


\begin{lem}
Let $x\geq13$, then $(x-1)\sqrt{1-\frac{2\ln x + 4}{x}} - x + \ln x + 4\geq 0$.\\

\label{ks2}
\end{lem}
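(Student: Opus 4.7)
The plan is to reduce to a polynomial-type inequality by squaring, then close out with a routine monotonicity argument.

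First, I would confirm that both sides of the equivalent form
$$(x-1)\sqrt{1 - \tfrac{2\ln x + 4}{x}} \;\geq\; x - \ln x - 4$$
are non-negative on $[13,\infty)$. The right-hand side is positive because $x\mapsto x-\ln x-4$ has derivative $1-1/x>0$ and value $\approx 6.44$ at $x=13$. The radicand is non-negative because $x-2\ln x-4$ has derivative $1-2/x>0$ for $x>2$ and is positive at $x=13$. Hence squaring is reversible.

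Next, squaring both sides and clearing the $x$ in the denominator of the radicand yields the equivalent cubic-type inequality
$$(x-1)^2(x - 2\ln x - 4) \;\geq\; x\,(x - \ln x - 4)^2.$$
Setting $L=\ln x$ and expanding both sides, the cubic terms $x^3$ and the mixed terms $-2Lx^2$ cancel, and the difference of the two sides simplifies to
$$2x^2 \;-\; x\bigl(L^2 + 4L + 7\bigr) \;-\; (2L + 4).$$
Dividing by $x>0$, it suffices to show
$$h(x) \;:=\; 2x - (\ln x)^2 - 4\ln x - 7 - \frac{2\ln x + 4}{x} \;\geq\; 0 \qquad (x \geq 13).$$

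Finally, I would finish by monotonicity. A direct differentiation gives
$$h'(x) \;=\; 2 - \frac{2\ln x + 4}{x} + \frac{2\ln x + 2}{x^2},$$
and since $(2\ln x + 4)/x$ is decreasing for $x\geq e$ with value $\approx 0.70$ at $x=13$, one gets $h'(x)>1$ on $[13,\infty)$. Direct substitution shows $h(13)\approx 1.46>0$, which combined with monotonicity yields $h\geq 0$ on $[13,\infty)$ and hence the claim.

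The main obstacle is the bookkeeping in the squaring/expansion step: the cancellation that produces the clean expression $2x^2 - x(L^2+4L+7) - (2L+4)$ is not obvious a priori, and there is no shortcut via Lemma \ref{ks1}, because the weaker bound $\sqrt{1-y}\geq 1-y$ loses too much to close the inequality (one ends up needing $(2\ln x+4)/x \geq \ln x + 1$, which fails badly for $x\geq 13$). Once the polynomial reduction is done, the monotonicity of $h$ is elementary.
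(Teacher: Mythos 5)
Your proposal is correct and follows essentially the same route as the paper: both sides are checked non-negative, the inequality is squared, and the expansion reduces to the identical inequality $2x^2-7x-4-x(\ln x)^2-4x\ln x-2\ln x\geq 0$. The only difference is the final routine step — the paper dominates each logarithmic term by a fraction of $x^2$ (e.g.\ $\frac{11x^2}{20}\geq x(\ln x)^2$), while you divide by $x$ and argue via $h(13)>0$ and $h'>0$; both verifications are valid.
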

\begin{proof}
We show that: 
\[ (x-1)\sqrt{1-\frac{2\ln x + 4}{x}} \geq x - \ln x - 4.\]
Both sides are positive for $x\geq13$. Therefore, we can square them and this is equivalent to:
\begin{equation*}
    \begin{split}
        (x-1)^2 (\frac{x-2\ln x - 4}{x}) \geq (x - \ln x - 4)^2\\
        2x^2-7x-4-x(\ln x)^2-4x\ln x-2\ln x\geq 0.
    \end{split}
\end{equation*}

It is easy to prove that the last inequality holds for $x\geq 13$. Indeed, one can show $\frac{11x^2}{20}\geq x(\ln x)^2$, $\frac{4x^2}{5}\geq 4x\ln x$, $\frac{x^2}{20}\geq 2\ln x$, and $\frac{12x^2}{20}-7x-4\geq 0$ for $x\geq 13$.
\end{proof}

\begin{thm}
Let $G$ be a non-singular graph of order $n$. If $\Bar{d}\leq n-2\ln n -3$, then Conjecture \ref{conj2} holds for $G$. 
\label{bound-dbar}
\end{thm}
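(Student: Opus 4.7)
The plan is to apply Corollary~\ref{golden-ineq}, which reduces the conjecture to verifying
\[(2\mu_2+1)(\mu_1-\bar d)+(2m-n+1-\mu_1^2)\ \ge\ 2\mu_2\ln\mu_1.\]
Set $g(x)=(2\mu_2+1)(x-\bar d)+(2m-n+1-x^2)-2\mu_2\ln x$, so that the task is $g(\mu_1)\ge 0$. By Lemma~\ref{lem-decreasing} (applied with $c=\mu_2$, $b=2m-n+1$, $d=\bar d$), $g$ is decreasing on $[\mu_2,\infty)$, and Yuan's bound $\mu_1\le\sqrt{2m-n+1}=:y$ then yields
\[g(\mu_1)\ \ge\ g(y)\ =\ (2\mu_2+1)(y-\bar d)-2\mu_2\ln y.\]

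Next I would view the right-hand side as a linear function of $\mu_2$. Its value at $\mu_2=0$ is $y-\bar d$, which is nonnegative because $\bar d\le\mu_1\le y$. If its slope $2(y-\bar d-\ln y)$ is nonnegative, then $g(y)\ge y-\bar d\ge 0$ and we are done. Otherwise the minimum over $\mu_2\in[0,\mu_1]$ is attained at $\mu_2=\mu_1$; since that minimum value, considered as a function of $\mu_1$, is itself linearly decreasing, bounding $\mu_1\le y$ reduces the task to $(2y+1)(y-\bar d)\ge 2y\ln y$. Using $\bar d=(y^2+n-1)/n$ and hence the identity $y-\bar d=(y-1)(n-1-y)/n$, the problem collapses to the single-variable inequality
\begin{equation}\tag{$\star$}
(2y+1)(y-1)(n-1-y)\ \ge\ 2ny\ln y.
\end{equation}

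The main obstacle is verifying $(\star)$ for every $y\in[1,y_{\max}]$, where $y_{\max}:=\sqrt{n^2-2n\ln n-4n+1}$ is the largest value of $y$ compatible with the hypothesis $\bar d\le n-2\ln n-3$. That hypothesis gives $(n-y)(n+y)\ge 2n\ln n+4n-1$, from which $n-1-y\ge\ln n+1-1/(2n)$. Combined with the crude $\ln y\le\ln n$, this already handles $(\star)$ comfortably whenever $y$ is not close to $y_{\max}$. The delicate regime is $y$ close to $y_{\max}$, where both sides of $(\star)$ are of order $n^2\ln n$ and only a margin of order $n^2$ remains. Lemmas~\ref{ks1} and~\ref{ks2}, which hold for $n\ge 13$, are designed precisely to deliver this margin: after substituting $y\le y_{\max}$ together with the sharper $\ln y\le\ln n-\ln n/n+O(1/n)$ that follows from $y^2\le n^2-2n\ln n-4n+1$, the numerical content of those two lemmas is exactly what closes $(\star)$. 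Values of $n\le 12$, for which either Lemmas~\ref{ks1},~\ref{ks2} do not apply or the hypothesis is vacuous for non-singular graphs, can be dealt with by direct computation on the small finite set of remaining graphs.
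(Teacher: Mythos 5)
Your reduction is sound and follows the paper's own route up to the point where everything hinges on a single inequality: you invoke Corollary~\ref{golden-ineq}, apply Lemma~\ref{lem-decreasing} with $c=\mu_2$, $b=2m-n+1$, $d=\bar d$, and use Yuan's bound to pass from $g(\mu_1)$ to $g(y)$ with $y=\sqrt{2m-n+1}$ — exactly as the paper does. Your subsequent case split on the sign of $y-\bar d-\ln y$ is also valid, and your target $(\star)$, which after dividing by $2y+1$ reads $(y-1)(n-1-y)/n\ge \tfrac{2y}{2y+1}\ln y$, is in fact slightly \emph{weaker} than the inequality $(y-1)(n-1-y)/n\ge\ln y$ that the paper proves (the paper shows the ``slope'' is always nonnegative under the hypothesis, so your second case is vacuous).

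The genuine gap is that $(\star)$ is never actually proved. You assert that Lemmas~\ref{ks1} and~\ref{ks2} ``exactly close'' it, but those lemmas are single-variable statements in $n$ that arise only after the paper's specific substitution $t^2=(\bar d-1)/n$, which turns the key quantity into the \emph{concave} quadratic $(n-1)t-nt^2-\ln n$; concavity is what licenses checking just the two endpoints $t=2/\sqrt{n}$ and $t=\sqrt{1-(2\ln n+4)/n}$, to which the two lemmas correspond. Your $(\star)$ is a two-variable inequality over $y\in[1,y_{\max}]$, and the function $(2y+1)(y-1)(n-1-y)-2ny\ln y$ is not concave in $y$ on that range (its second derivative $-12y+4n-2-2n/y$ changes sign), so an endpoint check does not suffice and no substitute argument is supplied; the step ``$\ln y\le \ln n-\ln n/n+O(1/n)$'' with an unquantified error term is likewise not a proof. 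Two further loose ends: you allow $y$ down to $1$ (i.e.\ $\bar d$ down to $1$), whereas the lower endpoint handled by Lemma~\ref{ks1} corresponds to $\bar d=5$, which the paper reaches only by first invoking Corollary~\ref{cor2.1} to discard $\bar d<5.148$ — a reduction you omit; and ``direct computation on the remaining graphs of order $n\le 12$'' is a substantially larger search than the paper's $n\le 10$ check (the paper disposes of $n=11,12$ by noting $n-2\ln n-3<5.148$ there). To repair the proof, insert the reduction to $\bar d\ge 5$ and then either prove the stronger, cleaner inequality $y-\bar d\ge\ln y$ via the paper's substitution and concavity argument, or give an independent, complete verification of $(\star)$.
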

\begin{proof}
By computer we checked the assertion for $n \leq 10$. Thus, we may assume $n \geq 11$. Moreover, we can assume $\Bar{d}\geq 5.14$ due to Corollary \ref{cor2.1}. A straightforward consequence of this is for $n=11,12$, the value of $n-2\ln n -3\approx 3.20,4.03$ and Conjecture \ref{conj2} holds for $\Bar{d}\leq n-2\ln n -3$ in this cases. Hence, we can suppose $n\geq 13$. 
We set $c=\mu_2$, $d=\Bar{d}$, and $b=2m-n+1$ in the Lemma \ref{lem-decreasing}, where $m$ is the size of $G$. The function $f(x)$ becomes: 
\[f(x)=(2\mu_2+1)(x-\Bar{d})+(2m-n+1-x^2)-2\mu_2\ln x\]
which is decreasing for $x\geq\mu_2$. As $\mu_1\geq\mu_2$ and $\sqrt{2m-n+1}\geq\mu_1$,
\begin{equation*}
    \begin{split}
        f(\mu_1)\geq &  f(\sqrt{2m-n+1})\\
        = & (2\mu_2+1)(\sqrt{2m-n+1}-\Bar{d})-2\mu_2\ln(\sqrt{2m-n+1})\\
        = & 2\mu_2\left(\sqrt{2m-n+1} - \Bar{d} - \ln(\sqrt{2m-n+1}))+ (\sqrt{2m-n+1} - \Bar{d}\right)\\
        \geq & 2\mu_2\left(\sqrt{2m-n+1} - \Bar{d} - \ln(\sqrt{2m-n+1})\right)\\
        = & 2\mu_2\left(\sqrt{n\Bar{d}-n+1} - \Bar{d} - \ln(\sqrt{n\Bar{d}-n+1})\right).
    \end{split}
\end{equation*}
Now, we want to show that the parenthesis in the last equation is non-negative for $5\leq\Bar{d}\leq n-2\ln n -3$.\\
First note that $\sqrt{x}-\ln(\sqrt{x})$ is an increasing function for $x\geq 1$, because its derivative is $\frac{1}{2\sqrt{x}}-\frac{1}{2x}$. Hence,
\[\sqrt{n\Bar{d}-n+1} - \Bar{d} - \ln(\sqrt{n\Bar{d}-n+1})\geq \sqrt{n\Bar{d}-n} - \Bar{d} - \ln(\sqrt{n\Bar{d}-n}).\]
For simplicity, we set $ t^2=\frac{\Bar{d}-1}{n}\geq 0$. We have
\begin{equation*}
    \begin{split}
        \sqrt{n\Bar{d}-n} - \Bar{d} - \ln(\sqrt{n\Bar{d}-n})= & \sqrt{n^2t^2} - nt^2 - 1 - \ln(\sqrt{n^2t^2}) \\
        = & nt - nt^2 - \ln n - \ln t-1\\
        \geq &  nt - nt^2 - \ln n - t\\
        = &  (n-1)t - nt^2 - \ln n
    \end{split}
\end{equation*}
This is a quadratic function of $t$. As the coefficient of $t^2$ is negative, this function is concave and we only have to check two sides of the interval to show that it is non-negative. $5\leq\Bar{d}\leq n-2\ln n - 3$ implies $\frac{2}{\sqrt{n}}\leq t\leq \sqrt{1-\frac{2\ln n + 4}{n}}$.
\begin{itemize}
    \item $t=\frac{2}{\sqrt{n}}$.
    \begin{equation*}
    \begin{split}
        (n-1)t - nt^2-\ln n = & \frac{2(n-1)}{\sqrt{n}} - 4 -\ln n \\
        \geq & 0 \quad (\mathrm{Lemma}\; \ref{ks1})
    \end{split}
    \end{equation*}

    \item $t=\sqrt{1-\frac{2\ln n + 4}{n}}$.
    \begin{equation*}
    \begin{split}
        (n-1)t - nt^2-\ln n = & (n-1)\sqrt{1-\frac{2\ln n + 4}{n}} - n + \ln n + 4 \\
        \geq & 0 \quad (\mathrm{Lemma}\;\ref{ks2})
    \end{split}
    \end{equation*}
\end{itemize}
Thus, $f(\mu_1)\geq 0$. In other words: 
\[(2\mu_2+1)(\mu_1-\Bar{d})+(2m-n+1-\mu_1^2)-2\mu_2\ln \mu_1\geq 0,\]
which completes the proof according to Corollary \ref{golden-ineq}.
\end{proof}

\begin{thm}
\label{thm-chromatic}
 If G is a graph with $\chi (G) = 3$, then Conjecture \ref{conj2} holds except for possibly finitely many graphs.
\end{thm}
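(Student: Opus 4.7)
The plan is to reduce the statement to Theorem \ref{bound-dbar} via a classical density bound for $K_4$-free graphs. The key observation is that $\chi(G) = 3$ forces $\omega(G) \leq 3$, so $G$ contains no $K_4$. By Tur\'an's theorem this gives $|E(G)| \leq n^2/3$, hence the average degree satisfies
\[\bar{d} \;=\; \frac{2m}{n} \;\leq\; \frac{2n}{3}.\]

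Next I would compare $2n/3$ to the threshold $n - 2\ln n - 3$ appearing in Theorem \ref{bound-dbar}. The desired inequality $2n/3 \leq n - 2\ln n - 3$ is equivalent to $n \geq 6\ln n + 9$. This is a routine calculus check: the function $g(n) = n - 6\ln n - 9$ has derivative $1 - 6/n$, which is positive for $n > 6$, so it suffices to evaluate at one point to produce an explicit threshold $N_0$ (numerically $N_0 = 30$ works, since $6\ln 30 + 9 \approx 29.4 < 30$) beyond which $\bar{d} \leq 2n/3 \leq n - 2\ln n - 3$. Theorem \ref{bound-dbar} then immediately yields Conjecture \ref{conj2} for every non-singular graph $G$ with $\chi(G) = 3$ and $n \geq N_0$.

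Since there are only finitely many graphs on fewer than $N_0$ vertices, this accounts for all but finitely many graphs with $\chi(G) = 3$, which is exactly the conclusion claimed. The essentially only potential obstacle is ensuring that the Tur\'an bound $\bar{d} \leq 2n/3$ is tight enough for our purposes; but since Theorem \ref{bound-dbar} tolerates an average degree within $O(\log n)$ of $n$, a constant-factor gap like $2n/3$ versus $n$ is comfortably within its scope. In particular, no further spectral input for chromatic number $3$ graphs (for instance a Hoffman-type bound $|\lambda_n| \geq \lambda_1/2$) is needed, although such a bound would be the natural fallback if one wanted to shrink, or even eliminate, the finite exceptional set.
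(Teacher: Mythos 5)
Your proof is correct, but it takes a genuinely different route from the paper. You exploit only the combinatorial consequence of $\chi(G)=3$: since $\omega(G)\le\chi(G)=3$, the graph is $K_4$-free, Tur\'an gives $m\le n^2/3$, hence $\bar d\le 2n/3$, and $2n/3\le n-2\ln n-3$ once $n\ge 6\ln n+9$ (so $n\ge 30$ suffices); Theorem \ref{bound-dbar} then finishes. The paper instead uses the spectral consequence: Hoffman's inequality gives $\lambda_n\le-\lambda_1/2$, and an argument parallel to the bipartite case (Theorem \ref{thm-bipartite}) shows $\mathcal{E}(G)\ge n-1+\bar d+(\lambda_1/2-2\ln\lambda_1-0.3)$, which settles the conjecture whenever $\lambda_1\ge 10$; combined with Theorem \ref{thm6.19} for $\lambda_1\le 7.11$, only the window $\lambda_1\in(7.11,10)$ remains, where $\bar d\le\lambda_1\le 10\le n-2\ln n-3$ for $n\ge 19$ and Theorem \ref{bound-dbar} applies. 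Your argument is shorter and avoids any spectral input beyond Theorem \ref{bound-dbar} itself; the paper's argument yields a smaller exceptional set ($n\le 18$ versus your $n\le 29$) and the finer intermediate fact that the conjecture holds outright for every $\chi=3$ graph with $\lambda_1\notin(7.11,10)$, independent of order. Both establish the theorem as stated, since each leaves only finitely many undecided graphs.
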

\begin{proof}

According to Hoffman's Inequality \cite{hoffman1970eigenvalues}, we have $\lambda_n \leq \frac{-\lambda_1}{2}$. Using a similar argument given in the proof of Theorem \ref{thm-bipartite}, since $|\lambda_n| - \ln |\lambda_n| \geq \frac{\lambda_1}{2} - \ln \frac{\lambda_1}{2}$ one can see that:
\begin{equation*}
    \begin{split}
        \mathcal{E}(G)\geq &  \lambda_1+\frac{\lambda_1}{2}+ n-2+\ln\det|A(G)|-\ln\lambda_1-\ln\frac{\lambda1}{2}\\
        \geq & \lambda_1 + \frac{\lambda_1}{2}+n-2-2\ln\lambda_1 +\ln2\\
        \geq & n-1+\Bar{d}+ (\frac{\lambda_1}{2}-2\ln\lambda_1-0.3).
    \end{split}
\end{equation*}
The last parenthesis is positive for $\lambda_1\geq 10$. As a result, Conjecture \ref{conj2} holds for the graphs with chromatic number 3, unless $\lambda_1 \in (7.11, 10)$. We have $\Bar{d} \leq \lambda_1 \leq 10$. For $n \geq 19$, $n -2 \ln n -3 \geq 10$ and so $\Bar{d} \leq n -2 \ln n -3$. Thus Conjecture \ref{conj2} holds for all graphs with chromatic number 3 and order at least 19.
\end{proof}

In Corollary 3 of \cite{akbariplanar}, Conjecture \ref{conj1} was proved for planar graphs. Here we improve this corollary and provide a proof of Conjecture \ref{conj2} for planar graphs.
\begin{thm}
Conjecture \ref{conj2} holds for planar graphs. 
\label{planers}
\end{thm}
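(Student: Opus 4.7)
The plan is to combine Euler's formula with the two density-based results already proved in this paper, Corollary \ref{cor2.1} and Theorem \ref{bound-dbar}, and show that between them they cover every planar graph. For a planar graph of order $n$, Euler's formula gives $m \leq 3n - 6$, and therefore
\[
\bar d \;=\; \frac{2m}{n} \;\leq\; 6 - \frac{12}{n} \;<\; 6 .
\]
So the whole proof reduces to checking that this uniform upper bound on $\bar d$ falls into the hypothesis of one of the two previous results, depending on $n$.

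My split would be at $n = 14$. For $n \leq 14$ I would observe that $m \leq 3n - 6$ forces $m/n \leq 3 - 6/n$, and a short numerical check shows $3 - 6/n \leq 2.574$ precisely when $n \leq 14$; hence every planar graph in this range already satisfies the hypothesis $m \leq 2.574\,n$ of Corollary \ref{cor2.1} and we are done. For $n \geq 15$ I would instead invoke Theorem \ref{bound-dbar}, which needs $\bar d \leq n - 2\ln n - 3$. Because $\bar d \leq 6 - 12/n$ in the planar case, it suffices to verify
\[
n + \frac{12}{n} - 2\ln n \;\geq\; 9 \qquad \text{for every integer } n \geq 15.
\]

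The only real computation is this last inequality, and I expect it to be routine. Setting $h(n) = n + 12/n - 2\ln n$, one has $h'(n) = 1 - 12/n^{2} - 2/n$, which is positive on $[15,\infty)$; so $h$ is monotone increasing there, and the base value $h(15) = 15 + \tfrac{4}{5} - 2\ln 15 \approx 10.38 > 9$ finishes the job. I do not anticipate any serious obstacle: no new spectral or structural ingredient beyond Theorem \ref{bound-dbar} and Corollary \ref{cor2.1} is needed. The whole argument is really a bookkeeping check, confirming that the constants $2.574$ and $2\ln n + 3$ appearing in those two results have been tuned with exactly enough slack to meet Euler's $3n - 6$ bound at the transition $n = 14 \to n = 15$; cases with $n \leq 4$ are outside the $n \geq 5$ hypothesis of Conjecture \ref{conj2} and require no further comment.
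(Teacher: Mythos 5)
Your proof is correct and follows essentially the same route as the paper's: Euler's bound $m\leq 3n-6$, the case $n\leq 14$ handled by Corollary \ref{cor2.1}, and the case $n\geq 15$ handled by Theorem \ref{bound-dbar} (the paper simply checks $n-2\ln n-3\geq 6$ there, while you verify the marginally sharper $6-12/n\leq n-2\ln n-3$, but this is the same argument).
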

\begin{proof}
First note that for planar graphs in [Theorem 6.1.23, p. 241] of \cite{West} it was shown that $m\leq3n-6$, where $m=|E(G)|$ Therefore, $\Bar{d}\leq \frac{6n-12}{n}\leq 6$. If $n\geq 15$, then $n-2\ln n - 3\geq 6$ and Conjecture \ref{conj2} holds as a result of Theorem \ref{bound-dbar}. If $n\leq 14$, then $\Bar{d}\leq 6-\frac{12}{n}\leq 6 - \frac{12}{14}\approx 5.143<5.148$ and Conjecture \ref{conj2} holds due to Theorem \ref{cor2.1}.
\end{proof}


\begin{thm}
If Conjecture \ref{conj2} holds for connected graphs of order $n\geq 5$, then it holds for all graphs of order $n\geq 5$
\end{thm}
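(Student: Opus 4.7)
The plan is to reduce to connected components. Write $G = G_1 \sqcup \cdots \sqcup G_k$ with orders $n_i$, sizes $m_i$, and average degrees $\Bar{d}(G_i) = 2m_i/n_i$. Non-singularity of $G$ forces every $A(G_i)$ to be non-singular (because $A(G) = \bigoplus_i A(G_i)$), so each component has $n_i \geq 2$, and being connected it also has $\Bar{d}(G_i) \geq 2(n_i-1)/n_i \geq 1$. The case $k = 1$ is immediate from the hypothesis, so assume $k \geq 2$. Setting $\alpha_i := n_i/n$, so that $\Bar{d}(G) = \sum_i \alpha_i \Bar{d}(G_i)$ and $\mathcal{E}(G) = \sum_i \mathcal{E}(G_i)$, a straightforward rearrangement using $\sum_i(n_i-1) = n-k$ and $\sum_i(1-\alpha_i) = k-1$ produces the key identity
\[
\mathcal{E}(G) - \bigl(n - 1 + \Bar{d}(G)\bigr) \;=\; \sum_i F_i \;+\; \sum_i (1 - \alpha_i)\bigl(\Bar{d}(G_i) - 1\bigr),
\]
where $F_i := \mathcal{E}(G_i) - (n_i - 1 + \Bar{d}(G_i))$. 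Because $\alpha_i \leq 1$ and $\Bar{d}(G_i) \geq 1$, the second sum is non-negative term by term, so it suffices to show $F_i + (1-\alpha_i)(\Bar{d}(G_i) - 1) \geq 0$ for every $i$.

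For components with $n_i \geq 5$ the hypothesis of the theorem gives $F_i \geq 0$ directly. The substantive work is the case $n_i \leq 4$, where I enumerate the non-singular connected graphs: $K_2$ on two vertices; $K_3$ on three (since $P_3$ has eigenvalue $0$); and on four vertices only $K_4$, the paw, and $P_4$, because $C_4$, $K_{1,3}$, and $K_4 - e$ each have a zero eigenvalue. For $K_2$, $K_3$, and $K_4$ one reads off $F_i = 0$, and there is nothing to check. The only troublesome cases are $P_4$ and the paw, which are precisely the two exceptions listed in Conjecture \ref{conj2}. Their deficits are small and negative: directly from the spectra, $F_{P_4} = 2\sqrt{5} - 9/2 \approx -0.03$, and computing the roots of $\lambda^4 - 4\lambda^2 - 2\lambda + 1$ (the characteristic polynomial of the paw) gives $F_{\mathrm{paw}} \approx -0.04$.

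The main obstacle is to absorb these two deficits into the slack. Here the hypothesis $k \geq 2$ combined with $n_j \geq 2$ for every component forces $n \geq 6$ as soon as some $n_i = 4$, so $1 - \alpha_i = (n-4)/n \geq 1/3$. For a $P_4$ component the slack $(1-\alpha_i)(\Bar{d}(G_i)-1)$ is therefore at least $(1/3)(1/2) = 1/6$, comfortably exceeding $|F_{P_4}|$; for a paw component it is at least $1/3$, exceeding $|F_{\mathrm{paw}}|$. Hence every per-component quantity is non-negative, and summing them yields $\mathcal{E}(G) \geq n - 1 + \Bar{d}(G)$.
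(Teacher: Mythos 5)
Your proof is correct, and while it rests on the same two pillars as the paper's argument (the average degree of a disjoint union is a weighted mean of the components' average degrees, and every non-singular connected component has $\Bar{d}(G_i)\geq 1$, which supplies slack), the execution is genuinely different. The paper proves a two-component union lemma via the mediant inequality $\frac{2m_1}{n_1}\leq\frac{2m_1+2m_2}{n_1+n_2}\leq\frac{2m_2}{n_2}$, treats the unions $G_1\cup P_4$ and $G_1\cup H$ as separate cases with the numerical bounds $\mathcal{E}(P_4)\geq 4.47$ and $\mathcal{E}(H)\geq 4.96$, falls back on a computer search for the order-$8$ unions $(P_4,P_4),(P_4,H),(H,H)$, and then inducts on the number of components. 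Your single identity
\[
\mathcal{E}(G)-\bigl(n-1+\Bar{d}(G)\bigr)=\sum_i F_i+\sum_i(1-\alpha_i)\bigl(\Bar{d}(G_i)-1\bigr)
\]
handles all $k$ components at once, eliminates the induction, and — because a component of order $4$ inside a disconnected graph forces $1-\alpha_i\geq 1/3$ — absorbs the deficits $F_{P_4}\approx-0.03$ and $F_{\mathrm{paw}}\approx-0.04$ analytically rather than by computer. Your enumeration of the non-singular connected graphs on at most four vertices ($K_2$, $K_3$, $K_4$ with $F_i=0$; $P_4$ and the paw as the only exceptions; $P_3$, $C_4$, $K_{1,3}$, $K_4-e$ all singular) is complete and makes the case analysis airtight, so your version is arguably cleaner and more self-contained than the one in the paper.
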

\begin{proof}
Suppose a graph $G$ of order $n$ and size $m$ has two components $G_1$ and $G_2$ of orders $n_1$ and $n_2$ and sizes $m_1$ and $m_2$, respectively. If $G$ is non-singular, then $G_1$ and $G_2$ should be also non-singular. If Conjecture \ref{conj2} holds for $G_1$ and $G_2$, we have:
\[\mathcal{E}(G_1)\geq \frac{2m_1}{n_1} + n_1 - 1 \]
\[\mathcal{E}(G_2)\geq \frac{2m_2}{n_2} + n_2 - 1. \]

Without loss of generality, suppose $\frac{2m_1}{n_1}\leq\frac{2m_2}{n_2}$. Hence
$\frac{2m_1}{n_1}\leq \frac{2m_1+2m_2}{n_1+n_2}\leq \frac{2m_2}{n_2}$. Since $G_1$ has no isolated vertices, $\frac{2m_1}{n_1}\geq 1$. Now, we have:
\begin{equation*}
    \begin{split}
        \mathcal{E}(G)=\mathcal{E}(G_1) + \mathcal{E}(G_2) & \geq \frac{2m_2}{n_2}+n_1+n_2-1+ (\frac{2m_1}{n_1} - 1) \\
        & \geq \frac{2m_1+2m_2}{n_1+n_2} + n_1 + n_2 - 1.
    \end{split}
\end{equation*}
Hence, Conjecture \ref{conj2} holds for $G$.\\

There are only 2 non-singular connected graphs whose energy is less than $n-1+\Bar{d}$, $P_4$ and the graph depicted in Conjecture \ref{conj2}. We denote the later graph of order 4 by H. We need to study these cases independently. When $(G_1,G_2)\in\{(P_4,P_4),(P_4,H),(H,H)\}$ the graph $G_1\cup G_2$ is of order 8 and in our computer search, we already checked those cases. Therefore, we have to only show that if Conjecture \ref{conj2} holds for a graph $G_1$ of order $n$ and size $m$, then it will hold for $G_1\cup P_4$ and $G_1\cup H$.

 \begin{itemize}
     \item $G = G_1\cup P_4$. As $\mathcal{E}(P_4)\geq 4.47$, $2m_1\geq n_1\geq 2$:
     \begin{equation*}
         \begin{split}
             \mathcal{E}(G_1\cup P_4)\geq & \mathcal{E}(G_1)+4.47\\
             \geq & \frac{2m_1}{n_1}+ n_1 - 1 + 4.47 \\
             = & \frac{2m_1}{n_1}+ 0.47 + n_1 + 3 \\
             \geq & \frac{2m_1+4}{n_1+4}+ + 0.47 + n_1 + 3 \\
             \geq & \frac{2m_1+6}{n_1+4}+ n_1 + 3 
         \end{split}
     \end{equation*}
     \item $G = G_1\cup H$. As $\mathcal{E}(H)\geq 4.96$, $2m_1\geq n_1\geq 2$:
     \begin{equation*}
         \begin{split}
             \mathcal{E}(G_1\cup H)\geq & \mathcal{E}(G_1)+4.96\\
             \geq & \frac{2m_1}{n_1}+ n_1 - 1 + 4.96 \\
             = & \frac{2m_1}{n_1}+0.96+ n_1 + 3 \\
             \geq & \frac{2m_1+4}{n_1+4}+ +0.96+n_1 + 3\\
             \geq & \frac{2m_1+8}{n_1+4}+ n_1 + 3 
         \end{split}
     \end{equation*}
     
 \end{itemize}
As a result, if $G_1,G_2$ are two arbitrary non-singular connected graphs, then Conjecture \ref{conj2} holds for $G_1\cup G_2$. Now, using induction, the proof is complete for all non-singular graphs of order $n\geq 5$.
\end{proof}


\end{document}